\theoremstyle{definition} 
\newtheorem{lemma}{Lemma}
\newtheorem{theorem}[lemma]{Theorem}
\newtheorem*{theorem*}{Theorem}
\newtheorem{definition}[lemma]{Definition}
\newtheorem{example}[lemma]{Example}
\newcommand{\R}{\mathbb{R}}
\begin{document}

\title{Invariant spinors on flag manifolds}

\author{Diego Artacho}
\address{D.~Artacho: Department of Mathematics, Faculty of Natural Sciences, Imperial College London, 180 Queen's Gate London SW72AZ, UK}
\email{d.artacho21@imperial.ac.uk}

\author{Uwe Semmelmann} 
\address{U.~Semmelmann: Institut f\"{u}r Geometrie und Topologie, Fachbereich Mathematik, Universit\"{a}t Stuttgart, Pfaffenwaldring 57, 70569 Stuttgart, Germany}
\email{uwe.semmelmann@mathematik.uni-stuttgart.de}

\begin{abstract} 
    In this note, we characterise the existence of non-trivial invariant spinors on maximal flag manifolds associated to complex simple Lie algebras. This characterisation is based on the combinatorial properties of their set of positive roots. We also give some bounds for the dimension of the space of invariant spinors in each case. 
\end{abstract}

\maketitle

\section{Introduction}

The classification of manifolds carrying special spinors has been a significant area of interest in differential geometry, as these objects are often related to $G$-structures and curvature properties. For instance, Wang characterised Riemannian manifolds which carry non-trivial parallel spinors with respect to the Levi-Civita connection \cite{wang}. In particular, some special holonomies in Berger's list can be characterised by the existence of parallel spinors. Moreover, the existence of a non-zero Killing spinor implies that the metric is Einstein. 

It is natural to study spinors parallelised by other connections. For example, the field equations in string theory involve parallel spinors with respect to certain connections with torsion. In this paper, we focus on \textit{invariant} spinors on (a family of) homogeneous spaces, which are precisely the parallel ones with respect to the \textit{canonical connection}, which comes from the homogeneous structure. Invariant spinors are often useful to obtain \emph{explicit} examples of other special spinors, such as Killing or generalised Killing \cite{BFGK,A24}. In particular, all examples of Killing spinors on homogeneous nearly-K{\"a}hler manifolds in \cite{BFGK} are invariant. 

Flag manifolds, which are adjoint orbits of compact forms of complex simple Lie algebras, have a rich structure with interesting spinorial properties. Notably, \textit{maximal} flag manifolds admit a unique invariant spin structure -- see Lemma \ref{lemma:spin}. This paper explores the conditions under which these maximal flag manifolds admit non-trivial invariant spinors. Our main theorem establishes a characterisation of the existence of non-trivial invariant spinors in terms of combinatorial properties of the positive roots of the associated Lie algebra: 

\begin{theorem*}
    Let $\mathfrak{g}_{\mathbb{C}}$ be a complex simple Lie algebra with positive roots $a_1 , \dots , a_r$. Then, the maximal flag manifold associated to $\mathfrak{g}_{\mathbb{C}}$ admits non-trivial invariant spinors if, and only if, there exist $\varepsilon_1 , \dots , \varepsilon_r \in \{ \pm 1 \}$ such that $\sum_{i=1}^{r} \varepsilon_i a_i = 0$. Moreover, the dimension of the space of invariant spinors is equal to the number of different such linear combinations of the set of positive roots which equal zero. In particular, it is always even. 
    \end{theorem*}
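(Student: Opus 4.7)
The plan is to identify invariant spinors on $G/T$ with weight-zero vectors of a torus action on the spin representation, and then to read off the combinatorial criterion from the weight decomposition.

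First, I would invoke the standard correspondence for homogeneous vector bundles: for a reductive homogeneous space $G/H$ with an invariant spin structure, invariant spinors correspond bijectively to $H$-fixed vectors in the spin representation $\Sigma$ of $\mathrm{Spin}(\dim \mathfrak{m})$, where $H$ acts via the lift $\widetilde{\iota}: H \to \mathrm{Spin}(\dim \mathfrak{m})$ of the isotropy representation $\iota: H \to \mathrm{SO}(\dim \mathfrak{m})$ furnished by the spin structure. In our setting $H = T$, a maximal torus of $G$, and the existence and uniqueness of $\widetilde{\iota}$ is exactly the content of Lemma~\ref{lemma:spin}.

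Second, I would exploit the root-space decomposition to describe this $T$-action explicitly. The reductive complement $\mathfrak{m}$ splits as $\mathfrak{m} = \bigoplus_{i=1}^{r} \mathfrak{m}_{a_i}$, a sum of real $T$-invariant $2$-planes on which $T$ acts by rotation with speed $a_i$. Choosing an adapted orthonormal basis, $\iota$ factors through the standard maximal torus $T^r \subset \mathrm{SO}(2r)$. The weights of the spin representation $\Sigma$ of $\mathrm{Spin}(2r)$ with respect to the preimage of $T^r$ are well known to be $\tfrac12(\varepsilon_1, \dots, \varepsilon_r)$ with $\varepsilon_i \in \{\pm 1\}$. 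Pulling back via $\widetilde{\iota}$, the weights of $\Sigma$ as a $T$-representation are precisely
\[
\tfrac12 \sum_{i=1}^{r} \varepsilon_i a_i, \qquad (\varepsilon_1, \dots, \varepsilon_r) \in \{\pm 1\}^r,
\]
each appearing with multiplicity one in the weight decomposition of $\Sigma$.

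Third, since $T$ is connected, the space of $T$-fixed vectors in $\Sigma$ is the weight-zero subspace. The existence of a non-trivial invariant spinor is therefore equivalent to the existence of signs $(\varepsilon_i)$ with $\sum_i \varepsilon_i a_i = 0$, and its dimension equals the number of such sign choices. The parity statement is immediate: the involution $(\varepsilon_1, \dots, \varepsilon_r) \mapsto (-\varepsilon_1, \dots, -\varepsilon_r)$ preserves the set of sign vectors summing to zero and has no fixed point (as no $a_i$ vanishes), so the count is even.

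The only real subtlety I anticipate is the identification in the second step: one must check that under the lift $\widetilde{\iota}$ the weights of the half-spin representations combine to give the full set of $\tfrac12 \sum \varepsilon_i a_i$, rather than only one chirality. This reduces to observing that the full spin representation (not merely $\Sigma^{\pm}$) is the relevant $T$-module for counting \emph{all} invariant spinors, and that the weights of $\Sigma = \Sigma^+ \oplus \Sigma^-$ do exhaust all sign choices. Once this bookkeeping is in place, the proof follows directly from the weight computation above.
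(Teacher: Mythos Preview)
Your proposal is correct and follows essentially the same approach as the paper: identify invariant spinors with $T$-fixed vectors in the spin representation of $\mathrm{Spin}(2r)$, decompose this representation into one-dimensional $T$-weight spaces indexed by sign vectors $(\varepsilon_1,\dots,\varepsilon_r)$, and read off the combinatorial criterion from the zero-weight condition. The only difference is cosmetic: the paper builds the spin module concretely as $\Lambda^\bullet L'$ and computes the action of $e_1^{(j)}\cdot e_2^{(j)}$ on monomials by hand to obtain the eigenvalue $\tfrac{-i}{2}\bigl(\sum_{j\in J}a_j-\sum_{j\notin J}a_j\bigr)$, whereas you invoke the standard weight formula $\tfrac12(\varepsilon_1,\dots,\varepsilon_r)$ for $\Sigma$ directly and pull it back along the isotropy map.
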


We apply this theorem in turn to each complex simple Lie algebra, showing that the existence of non-trivial invariant spinors varies significantly across different algebras - see Theorem \ref{thm:main}. 

Finally, we explore several examples demonstrating that for \textit{general} flag manifolds, as opposed to just \textit{maximal} flag manifolds, the behaviour can vary widely. For instance, complex projective space $\mathbb{CP}^n$, which is a flag manifold of $A_n$, is spin if and only if $n$ is odd. Furthermore, $\mathbb{CP}^{2n+1}$, which is a flag manifold of $C_{n+1}$, exhibits different spinorial properties compared to the maximal flag manifold of $C_{n+1}$. These examples illustrate that, for general flag manifolds, essentially all possible spinorial behaviours can occur. 

This paper sheds some light into the rich interplay between Lie algebras and spin geometry on homogeneous spaces. 

\section{Flag manifolds}

Let us start by recalling the construction of flag manifolds -- see e.g. \cite[Ch. 8]{besse}. Let $\mathfrak{g}_{\mathbb{C}}$ be a complex simple Lie algebra with compact real form $\mathfrak{g}$, and let $G$ be the compact simply connected Lie group with Lie algebra $\mathfrak{g}$. This group has a maximal torus, which we denote by $T$. 
\begin{definition} \label{def:MF}
    The maximal flag manifold associated to $\mathfrak{g}_{\mathbb{C}}$ is 
    \[ 
    \mathrm{MF}\left( \mathfrak{g}_{\mathbb{C}} \right) := G / T \, . 
    \] 
\end{definition}
These manifolds have nice spinorial properties: 
\begin{lemma}\label{lemma:spin}
Let $\mathfrak{g}_{\mathbb{C}}$ be a complex simple Lie algebra, and let $G$ be the simply connected Lie group corresponding to the compact real form of $\mathfrak{g}_{\mathbb{C}}$. Then, $\mathrm{MF}\left(\mathfrak{g}_{\mathbb{C}}\right)$ admits a unique spin structure. Moreover, this spin structure is $G$-invariant. 
\end{lemma}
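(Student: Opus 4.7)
The plan is to establish both claims simultaneously by (i) constructing an explicit $G$-invariant spin structure as a lift of the isotropy representation, and (ii) showing that $\mathrm{MF}(\mathfrak{g}_{\mathbb{C}})$ admits at most one spin structure, so that the one produced in (i) is automatically the unique spin structure on the manifold.

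For (i), I would identify the oriented orthonormal frame bundle of $G/T$ with $G \times_T SO(\mathfrak{m})$, where $\mathfrak{m}$ is the Killing-orthogonal complement of $\mathfrak{t}$ in $\mathfrak{g}$ and $T$ acts via the isotropy representation $\mathrm{Ad} : T \to SO(\mathfrak{m})$. A $G$-invariant spin structure is then precisely a lift $\widetilde{\mathrm{Ad}} : T \to \mathrm{Spin}(\mathfrak{m})$ of $\mathrm{Ad}$. The root decomposition splits $\mathfrak{m} = \bigoplus_{i=1}^{r} \mathfrak{m}_{\alpha_i}$ into two-dimensional real $T$-invariant summands on which $T$ acts by the rotation character $\alpha_i$. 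Using the standard presentation of the maximal torus of $\mathrm{Spin}(2r)$ as the connected double cover of the maximal torus $SO(2)^r$ of $SO(2r)$, producing $\widetilde{\mathrm{Ad}}$ is equivalent to exhibiting a character $\chi \in X^*(T)$ with $2\chi = \sum_{i=1}^{r} \alpha_i$; that is, to exhibiting $\rho := \tfrac{1}{2}\sum_{i}\alpha_i$ as an element of $X^*(T)$. Because $G$ is compact and simply connected, $X^*(T)$ coincides with the full weight lattice $P(\mathfrak{g}_{\mathbb{C}})$, to which $\rho$ is well-known to belong (it equals the sum of the fundamental weights). So $\chi = \rho$ provides the required lift, unique because $X^*(T)$ is torsion-free, and this yields a canonical $G$-invariant spin structure.

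For (ii), I would use the long exact homotopy sequence of the fibration $T \to G \to G/T$: connectedness of $T$ together with simple connectedness of $G$ force $\pi_1(G/T) = 0$, hence $H^1(G/T; \mathbb{Z}/2) = 0$. Since the set of spin structures on a connected manifold is either empty or a torsor over $H^1$ with $\mathbb{Z}/2$-coefficients, at most one spin structure can exist; combining with (i) concludes the proof. The step I expect to require most care is the covering bookkeeping in (i), namely matching conventions between the maximal tori of $T$, $SO(\mathfrak{m})$ and $\mathrm{Spin}(\mathfrak{m})$ and verifying that the character $\rho$ really does produce the desired lift of $\mathrm{Ad}$. Once this is set up, the decisive structural ingredient is the identification $X^*(T) = P(\mathfrak{g}_{\mathbb{C}})$, which is precisely where simple connectedness of $G$ enters.
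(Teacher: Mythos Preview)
Your argument is correct. The paper, however, proceeds quite differently: it deduces existence of a spin structure from the fact that $\mathrm{MF}(\mathfrak{g}_{\mathbb{C}})$ is realised as an adjoint orbit in $\mathfrak{g}$ with trivial normal bundle (so that its tangent bundle is stably trivial), and then obtains uniqueness and $G$-invariance by direct citation of results in the literature. Your route is more constructive and self-contained: you build the invariant spin structure by hand as the lift $\widetilde{\mathrm{Ad}}\colon T\to\mathrm{Spin}(\mathfrak{m})$ determined by the half-sum of positive roots $\rho$, using simple connectedness of $G$ to identify $X^{*}(T)$ with the full weight lattice, and you deduce uniqueness from $\pi_1(G/T)=0$ via the long exact sequence of the fibration. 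The advantage of your approach is that the explicit lift via $\rho$ is exactly the data one needs when computing the $T$-action on the spinor module later on, so it dovetails with the weight computations that drive the rest of the paper; the paper's approach has the advantage of brevity, delegating the spin-existence and invariance statements to standard references rather than reproving them.
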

\begin{proof}
The fact that $\rm{MF} \left( \mathfrak{g}_{\mathbb{C}} \right)$ admits a spin structure follows from the fact that it is diffeomorphic to an adjoint orbit in $\mathfrak{g}$ with trivial normal bundle -- see e.g. \cite[\S 8.23]{besse}. The fact that this structure is unique and $G$-invariant follows from \cite[p. 327]{HS90} -- see also \cite{DKL} and \cite[Prop. 2.1]{AH70}. 
\end{proof}

Note that Definition \ref{def:MF} deals with \textit{maximal} flag manifolds. In fact, we can make a more general definition:  
\begin{definition} \label{def:MF_gen}
    A flag manifold associated to $\mathfrak{g}_{\mathbb{C}}$ is an adjoint orbit of $G$ in $\mathfrak{g}$. 
\end{definition}
These general flag manifolds are quotients of $G$ by the centraliser of a torus, and hence the notion of maximality defined earlier. 

\section{Invariant spinors}

In this section we study the existence of non-trivial invariant spinors on maximal flag manifolds. First, we need a definition: 
\begin{definition}\label{def:sld}
Let $V$ be a vector space. A finite subset $\{ v_1 , \dots , v_r \} \subseteq V$ is said to be \emph{strongly linearly dependent} if there exist $\varepsilon_1 , \dots \varepsilon_r \in \{ \pm 1\}$ such that $\sum_{i=1}^{r} \varepsilon_i v_i = 0$. Such a combination is called a \textit{strong linear combination}. 
\end{definition}

We have a combinatorial characterisation of existence of invariant spinors on maximal flag manifolds in terms of strong linear dependence of positive roots:  
\begin{theorem} \label{thm:comb}
Let $\mathfrak{g}_{\mathbb{C}}$ be a complex simple Lie algebra with positive roots $a_1 , \dots , a_r$. Then, $\mathrm{MF} \left(\mathfrak{g}_{\mathbb{C}} \right)$ admits non-trivial invariant spinors if, and only if, $\{ a_1 , \dots , a_r \}$ is strongly linearly dependent. Moreover, the dimension of the space of invariant spinors is equal to the number of strong linear combinations of the set of positive roots which equal zero. In particular, it is always even. 
\end{theorem}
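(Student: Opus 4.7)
The plan is to reduce the problem to linear algebra on the spin representation of the isotropy, and then read off the conclusion from its weight decomposition under the maximal torus. By the standard correspondence for homogeneous spin structures on a reductive space, the space of $G$-invariant spinors on $\mathrm{MF}(\mathfrak{g}_{\mathbb{C}}) = G/T$ is canonically isomorphic to the subspace of $T$-invariant vectors in the complex spin representation $\Sigma$ attached to the isotropy $T$-module $\mathfrak{m}$, where $\mathfrak{g} = \mathfrak{t} \oplus \mathfrak{m}$ is the orthogonal decomposition with respect to (minus) the Killing form and $T$ acts on $\Sigma$ through the lift of its isotropy action to $\mathrm{Spin}(\mathfrak{m})$ supplied by the unique $G$-invariant spin structure from Lemma \ref{lemma:spin}.

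First I would write the isotropy representation as $\mathfrak{m} = \bigoplus_{i=1}^{r} V_{a_i}$, where each $V_{a_i}$ is a real $2$-plane on which $T$ acts by rotation with angular character $a_i$; this is the standard real form of the root space decomposition $\mathfrak{g}_{\mathbb{C}} = \mathfrak{t}_{\mathbb{C}} \oplus \bigoplus_{\alpha} \mathfrak{g}_\alpha$. Second, I would use the tensor-product decomposition of the spin representation: $\Sigma \cong \Sigma_1 \otimes \cdots \otimes \Sigma_r$, with each two-dimensional factor $\Sigma_i$ having weights $\pm\tfrac{1}{2}a_i$ under the spin cover of the rotation subgroup acting on $V_{a_i}$. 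Distributing the tensor product yields
\[
\Sigma = \bigoplus_{\varepsilon \in \{\pm 1\}^r} \Sigma_\varepsilon, \qquad \mathrm{weight}(\Sigma_\varepsilon) = \tfrac{1}{2}\sum_{i=1}^{r} \varepsilon_i\, a_i,
\]
with each summand one-dimensional. A vector is $T$-invariant precisely when it lies in the zero-weight subspace, so the dimension of the space of invariant spinors equals the number of $\varepsilon \in \{\pm 1\}^r$ with $\sum \varepsilon_i a_i = 0$, which is exactly the statement. Parity of this count is then immediate from the fixed-point-free involution $\varepsilon \mapsto -\varepsilon$ on the set of such sign choices.

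The main obstacle I anticipate is purely bookkeeping: the half-weights $\tfrac{1}{2}a_i$ are characters of a double cover $\widetilde{T}$ rather than of $T$ itself, so the formal weights $\tfrac{1}{2}\sum \varepsilon_i a_i$ \emph{a priori} live in $\mathfrak{t}^*$ and need not be characters of $T$. However, the criterion for $T$-invariance is vanishing of the weight, and the zero element of $\mathfrak{t}^*$ tautologically descends to the trivial character of $T$; hence the combinatorial count of strong linear combinations of positive roots equal to zero is precisely the dimension of the invariant subspace, and no further input beyond Lemma \ref{lemma:spin} is required.
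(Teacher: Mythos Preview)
Your proposal is correct and follows essentially the same route as the paper: both identify $G$-invariant spinors with $T$-fixed vectors in the spin module, decompose the latter into one-dimensional weight spaces indexed by $\varepsilon\in\{\pm1\}^r$ with weight $\tfrac{1}{2}\sum_i\varepsilon_i a_i$, and read off the count. The only cosmetic difference is that the paper realises this weight decomposition by an explicit Clifford computation in the $\Lambda^\bullet L'$ model (computing $e^{(j)}_1\cdot e^{(j)}_2$ on wedge monomials), whereas you invoke the tensor factorisation $\Sigma\cong\Sigma_1\otimes\cdots\otimes\Sigma_r$ directly; these are two packagings of the same fact.
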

\begin{proof} 
Let $\mathfrak{g}$ be the compact real form of $\mathfrak{g}_{\mathbb{C}}$, and let $\mathfrak{h} \subseteq \mathfrak{g}$ be a Cartan subalgebra. Then, as a representation of $\mathfrak{h}$, the Lie algebra $\mathfrak{g}$ decomposes as 
\[ 
\mathfrak{g} = \mathfrak{h} \oplus \mathfrak{a}_1 \oplus \cdots \oplus \mathfrak{a}_r \, , 
\] 
where $\dim \left( \mathfrak{a}_i \right) = 2$ and, for each $X \in \mathfrak{h}$, 
\begin{equation} \label{eq:ad}
\mathrm{ad}(X)\rvert_{\mathfrak{a}_i} = a_i(X) \, e^{(i)}_1 \wedge e^{(i)}_2 \, , 
\end{equation} 
where $\left(e^{(i)}_1 , e^{(i)}_2\right)$ is an orthonormal basis of $\mathfrak{a}_i$ with respect to the negative of the Killing form of $\mathfrak{g}$. Note that fixing a partial order in the set of rots, i.e., determining which of $\pm a_i$ is positive, determines an orientation on each $\mathfrak{a}_i$. So, we have a positively-oriented orthonormal basis of the orthogonal complement of $\mathfrak{h}$: 
\[ 
\left( e^{(1)}_1 , e^{(1)}_2 , \dots , e^{(r)}_1 , e^{(r)}_2 \right) \, . 
\] 
We now define, as in \cite{AHL}, 
\begin{align*} 
L := \mathrm{span}_{\mathbb{C}} \left\{ x_j := \frac{1}{\sqrt{2}} \left( e^{(j)}_1 - i e^{(j)}_2 \right) \, \rvert \, 1 \leq j \leq r \right\} \, , \\
L' := \mathrm{span}_{\mathbb{C}} \left\{ y_j := \frac{1}{\sqrt{2}} \left( e^{(j)}_1 + i e^{(j)}_2 \right) \, \rvert \, 1 \leq j \leq r \right\} \, , 
\end{align*} 
and note that the spin representation of the complex Clifford algebra $\mathbb{C}l(2r)$ is isomorphic to the action on $\Lambda^{\bullet} L'$ defined by extending 
\[ 
x_j \cdot \eta := i \sqrt{2} x_j \lrcorner \eta \, , \quad y_j \cdot \eta := i \sqrt{2} y_j \wedge \eta \, ,  
\] 
for $1 \leq j \leq r$ and $\eta \in \Lambda^{\bullet} L'$. Hence, if $G$ is the simply connected Lie group with Lie algebra $\mathfrak{g}$ and $T$ is its maximal torus with Lie algebra $\mathfrak{h}$, the spinor bundle of $\mathrm{MF}\left(\mathfrak{g}_{\mathbb{C}} \right)$ is given by 
\begin{align*} 
G \, \times_{\widetilde{\sigma}} \, &\Lambda^{\bullet} L' \to \mathrm{MF}\left(\mathfrak{g}_{\mathbb{C}} \right) \\
&[g,\psi] \mapsto gT\, , 
\end{align*}
where $\widetilde{\sigma} \colon T \to \rm{Spin}(2r)$ is the lift of the isotropy representation $\sigma \colon T \to \rm{SO}(2r)$ to $\rm{Spin}(2r)$. In particular, spinors correspond to $T$-equivariant maps $G \to  \Lambda^{\bullet} L'$, and \textit{invariant} spinors correspond to \textit{constant} such maps. Invariant spinors, then, can be seen as those elements of $\Lambda^{\bullet} L'$ which are annihilated by the differential action of $\mathfrak{h}$. 

The differential of the isotropy representation is just the adjoint action $\rm{ad}_{\mathfrak{h}}$ of $\mathfrak{h}$ on its reductive complement $\mathfrak{a}_1 \oplus \cdots \oplus \mathfrak{a}_r$. Hence, the action of an element $ X \in \mathfrak{h}$ on $\Lambda^{\bullet} L'$ is given by the action of $\widetilde{\mathrm{ad_{\mathfrak{h}}}}(X) := \rho \left( \rm{ad}_{\mathfrak{h}}(X) \right) \in \mathbb{C}l(2r)$, where $\rho \colon \mathfrak{so}(2r) \to \mathfrak{spin}(2r) \subseteq \mathbb{C}l(2r)$ is the usual map given by $e_i \wedge e_j \mapsto \frac{1}{2} e_i \cdot e_j$.  

By \eqref{eq:ad}, one can easily see that the action of $X \in \mathfrak{h}$ on $\eta \in \Lambda^{\bullet} L'$ is given by 
\[ 
    \widetilde{\mathrm{ad_{\mathfrak{h}}}}(X) \cdot \eta = \frac{1}{2} \sum_{j=1}^{r} a_j(X) \, e^{(j)}_1 \cdot e^{(j)}_2 \cdot \eta \, . 
\] 
As an example of how these calculations work, note that for $1 \leq j_1 < \cdots < j_k \leq r$ and $1 \leq j \leq r$, 
\[
    e^{(j)}_1 \cdot e^{(j)}_2 \cdot \left( y_{j_1} \wedge \cdots \wedge y_{j_k} \right) = \begin{cases} 
      - i \left( y_{j_1} \wedge \cdots \wedge y_{j_k} \right)  & \text{if } j \in \{j_1 , \dots , j_k \} \, , \\
       i \left( y_{j_1} \wedge \cdots \wedge y_{j_k} \right) & \text{if } j \not\in \{j_1 , \dots , j_k \} \, . 
    \end{cases}
\]
In particular, 
\begin{align*}
    2 \widetilde{\mathrm{ad_{\mathfrak{h}}}}(X) \cdot \left( y_{j_1} \wedge \cdots \wedge y_{j_k} \right) = -i \left(  \sum_{j \in \{j_1 , \dots , j_k \}} a_j - \sum_{j \not\in \{j_1 , \dots , j_k \}} a_j \right) (X) \cdot \left( y_{j_1} \wedge \cdots \wedge y_{j_k} \right) \, . 
\end{align*}
The theorem now follows. 
\end{proof}

We now apply Theorem \ref{thm:comb} in turn to each complex simple Lie algebra. 

\begin{theorem}\label{thm:main}
For each of the complex simple Lie algebras, the following holds: 
\begin{enumerate} 
\item $\mathrm{MF} \left( A_n \right)$, $n \geq 1$, admits non-trivial invariant spinors if, and only if, $n$ is even. In this case, the dimension of the space of invariant spinors is at least $2^{\lfloor \frac{n}{2} \rfloor}$.
\item $\mathrm{MF} \left( B_n \right)$, $n \geq 2$, does not admit non-trivial invariant spinors for any value of $n$. 
\item $\mathrm{MF} \left( C_n \right)$, $n \geq 3$, admits non-trivial invariant spinors if, and only if, $n \equiv 0,3 \bmod 4$. In this case, the dimension of the space of invariant spinors is at least $2^{\lfloor \frac{n+1}{4} \rfloor}$. 
\item $\mathrm{MF} \left( D_n \right)$, $n \geq 4$, admits non-trivial invariant spinors if, and only if, $n \geq 4$ and $n \equiv 0,1 \bmod 4$. In this case, the dimension of the space of invariant spinors is at least $2^{\lfloor \frac{n+1}{4} \rfloor}$. 
\item $\mathrm{MF} \left( E_6 \right)$ admits non-trivial invariant spinors. The space of invariant spinors has dimension exactly $13697920$.
\item $\mathrm{MF} \left( E_7 \right)$ does not admit non-trivial invariant spinors. 
\item $\mathrm{MF} \left( E_8 \right)$ admits non-trivial invariant spinors. The space of invariant spinors has dimension at least $369600$.  
\item $\mathrm{MF} \left( F_4 \right)$ admits non-trivial invariant spinors. The space of invariant spinors has dimension exactly $34432$.  
\item $\mathrm{MF} \left( G_2 \right)$ admits a four-dimensional space of invariant spinors. 
\end{enumerate}
\end{theorem}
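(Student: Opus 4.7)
The overall approach is to apply Theorem~\ref{thm:comb} to each root system individually. It is convenient to reformulate the condition: since $\sum_{i} \varepsilon_i a_i = 2\rho - 2 \sum_{i : \varepsilon_i = -1} a_i$, where $\rho$ is the half-sum of positive roots, a strong linear combination summing to zero corresponds bijectively to a subset $S$ of the positive roots with $\sum_{a \in S} a = \rho$. So the task becomes: count subsets of the positive roots that sum to $\rho$. A clean necessary condition is that $\rho$ lies in the root lattice $Q$, since any subset-sum of roots lies in $Q$; this supplies the obstruction for the ``no non-trivial invariant spinors'' cases.

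Working case by case in the standard coordinates: for $A_n$, the coordinates of $\rho$ are of the form $(n-2i+2)/2$, so $\rho \in Q$ iff $n$ is even; for $B_n$, the vector $\rho = (n-\tfrac12, n-\tfrac32, \dots, \tfrac12)$ has half-integer entries and hence never lies in $Q = \mathbb{Z}^n$, handling (2); for $C_n$, $\rho = (n, n-1, \dots, 1)$ lies in $Q = \{v \in \mathbb{Z}^n : \sum v_i \in 2\mathbb{Z}\}$ iff $n(n+1)/2$ is even, i.e.\ $n \equiv 0, 3 \bmod 4$; for $D_n$, $\rho = (n-1, n-2, \dots, 0)$ lies in $Q$ iff $n(n-1)/2$ is even, i.e.\ $n \equiv 0, 1 \bmod 4$; for $E_7$, writing $\rho$ as the sum of the fundamental weights and using that exactly one of them (the minuscule $\omega_7$) represents the non-trivial class in $P/Q$ gives $\rho \notin Q$, proving (6). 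This takes care of all the negative results and the necessity of the numerical conditions in the positive ones.

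For sufficiency and the lower bounds on dimensions, the strategy is to construct explicit families of subsets of positive roots summing to $\rho$. In the classical cases $A_{2m}$, $C_n$ with $n \equiv 0,3 \bmod 4$, and $D_n$ with $n \equiv 0, 1 \bmod 4$, the idea is a block decomposition of the positive-root set in which independent binary choices on each block each preserve the constraint $\sum a = \rho$, yielding the stated $2^{\lfloor n/2 \rfloor}$ or $2^{\lfloor (n+1)/4 \rfloor}$ solutions. For the exceptional algebras $G_2$, $F_4$, and $E_6$, the root systems are small enough ($r = 6$, $24$, $36$) that direct enumeration of $\{\pm 1\}^r$ (or equivalently, subsets summing to $\rho$) yields the exact counts $4$, $34432$, and $13697920$. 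For $E_8$, where $r = 120$ makes naive enumeration over $2^{120}$ sign patterns infeasible, the lower bound $369600$ must be obtained by restricting to a structured subfamily --- for instance, by isolating a Levi or parabolic subsystem on which strong combinations can be enumerated, and extending by a suitable fixed sign pattern on the complement.

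The main obstacles are twofold. First, producing block decompositions in the classical cases that actually achieve the stated lower bounds $2^{\lfloor n/2 \rfloor}$ and $2^{\lfloor (n+1)/4 \rfloor}$: exhibiting \emph{one} strong combination (granted $\rho \in Q$) is comparatively easy, but arranging for genuinely independent binary choices scaling linearly with $n$ requires a uniform combinatorial construction on the root diagram. Second, the $E_8$ lower bound is not accessible by brute force and demands an explicit subfamily of $\{\pm 1\}$-patterns that one can both count and verify sum to zero --- this is where I expect the proof to require the most work.
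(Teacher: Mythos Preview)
Your reformulation---replacing the $\pm 1$ sign problem by the search for subsets $S$ of positive roots with $\sum_{a \in S} a = \rho$, and then using $\rho \in Q$ as a uniform obstruction---is correct and is genuinely cleaner than what the paper does. The paper proves each non-existence case by an ad hoc parity count (e.g.\ ``there are $27$ terms of the form $\pm\lambda_1$'' for $E_7$, or ``an odd number of terms $\pm 2\lambda_i$'' for $C_n$, $D_n$), which is exactly the statement $\rho \notin Q$ read off in one coordinate. Your version packages all of these into a single lattice criterion. One small slip: for $E_7$ it is not true that \emph{exactly one} fundamental weight represents the non-trivial class in $P/Q \cong \mathbb{Z}/2$; three of them do (in Bourbaki numbering, $\omega_2,\omega_5,\omega_7$). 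Since three is odd, your conclusion $\rho \notin Q$ survives, but the stated reason should be corrected.

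Where your proposal remains a sketch, the paper is explicit. For $A_{2k}$, $C_n$, $D_n$ the paper writes down concrete sums $S_l$, $S'_l$, $S''_l$ (each a strong linear combination over a block of roots that vanishes on its own), so that $\pm S_0 \pm \cdots \pm S_{k-1}(\pm T) = 0$ gives the $2^{\lfloor n/2\rfloor}$ or $2^{\lfloor (n+1)/4\rfloor}$ independent choices; this is precisely the ``block decomposition'' you anticipate, and it is not hard once one sees the pattern. For $E_6$, $F_4$, $G_2$ the paper exhibits one explicit vanishing combination and gets the exact dimension by computer, as you propose. For $E_8$ the paper does not use a Levi subsystem in the abstract sense you suggest; rather, it separates off the $21$ roots $\{\lambda_i - \lambda_j : 2 \le i < j \le 8\}$ (an $A_6$-type set, handled by the $A_{2k}$ argument with $k=3$) and exhibits a vanishing strong combination on the remaining $99$ roots directly, then obtains the bound $2 \times 70 \times 2640 = 369600$ with computer assistance on the factors. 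So your outline is sound and your identification of the two remaining obstacles is accurate; what is missing is only the explicit combinatorics, which the paper supplies.
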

\begin{proof} 
By Theorem \ref{thm:comb}, we need to discuss strong linear dependence of the set of positive roots of each complex simple Lie algebra. We recall them in Table \ref{table:roots}, and will deal with each case in turn. 

\begin{table}[h]
    \centering
    \begin{tabular}{|c|c l|}
    \hline 
    $\mathfrak{g}_{\mathbb{C}}$ & \multicolumn{2}{|c|}{Positive roots of $\mathfrak{g}_{\mathbb{C}}$} \\ \hline
    \multirow{2}{*}{$A_n$} & $\lambda_i - \lambda_j$ \, , & $1 \leq i < j \leq n$ \\ 
                            & $\lambda_i + \sum_{j=1}^{n} \lambda_j$ \, , & $ 1 \leq i \leq n$ \\ \hline
    \multirow{3}{*}{$B_n$} & $\lambda_i - \lambda_j$ \, , & $1 \leq i < j \leq n$ \\ 
                            & $\lambda_i + \lambda_j$ \, , & $1 \leq i < j \leq n$ \\
                            & $\lambda_i$ \, , & $1 \leq i \leq n$ \\ \hline
    \multirow{3}{*}{$C_n$} & $\lambda_i - \lambda_j$ \, , & $1 \leq i < j \leq n$ \\ 
                            & $\lambda_i + \lambda_j$ \, , & $1 \leq i < j \leq n$ \\
                            & $2\lambda_i$ \, , & $1 \leq i \leq n$ \\ \hline
    \multirow{2}{*}{$D_n$} & $\lambda_i - \lambda_j$ \, , & $1 \leq i < j \leq n$ \\ 
                            & $\lambda_i + \lambda_j$ \, , & $1 \leq i < j \leq n$ \\ \hline
    \multirow{3}{*}{$E_6$} & $\lambda_i - \lambda_j$ \, , & $1 \leq i < j \leq 6$ \\ 
                            & $\lambda_i + \lambda_j + \lambda_k$ \, , & $1 \leq i < j < k \leq 6$ \\ 
                            & $\lambda_1 + \lambda_2 + \lambda_3 + \lambda_4 + \lambda_5 + \lambda_6$ & \\ \hline
    \multirow{3}{*}{$E_7$} & $\lambda_i - \lambda_j$ \, , & $1 \leq i < j \leq 7$ \\ 
                            & $\lambda_i + \lambda_j + \lambda_k$ \, , & $1 \leq i < j < k \leq 7$ \\ 
                            & $\left( \sum_{j=1}^{7} \lambda_j \right) - \lambda_i$ \, , & $1 \leq i \leq 7$ \\ \hline
    \multirow{4}{*}{$E_8$} & $\lambda_i - \lambda_j$ \, , & $1 \leq i < j \leq 8$ \\ 
                            & $\lambda_i + \lambda_j + \lambda_k$ \, , & $1 \leq i < j < k \leq 8$ \\ 
                            & $\left( \sum_{j=1}^{8} \lambda_j \right) + \lambda_i$ \, , & $1 \leq i \leq 8$ \\ 
                            & $\left( \sum_{k=1}^{8} \lambda_k \right) - \lambda_i - \lambda_j$ \, , & $1 \leq i < j \leq 8$ \\ \hline
    \multirow{4}{*}{$F_4$} & $\lambda_i - \lambda_j$ \, , & $1 \leq i < j \leq 4$ \\ 
                            & $\lambda_i + \lambda_j$ \, , & $1 \leq i < j \leq 4$ \\ 
                            & $\lambda_i$ \, ,& $1 \leq i \leq 4$ \\ 
                            & $\frac{1}{2} \left( \lambda_1 \pm \lambda_2 \pm \lambda_3 \pm \lambda_4 \right)$ & \\ \hline
    $G_2$ & $\lambda_1 , \lambda_2,-\lambda_1 - \lambda_2,\lambda_1-\lambda_2, \lambda_1 + 2\lambda_2,2\lambda_1 + \lambda_2$ & \\ \hline
    \end{tabular} 
    \vspace{.3cm}
    \caption{Complex simple Lie algebras and their positive roots (see e.g. \cite[\S 2.14]{Samelson}). The roots of $X_m$ are elements of $\R^m$, and we take $(\lambda_1 , \dots , \lambda_m)$ to be its standard basis. }
    \label{table:roots}
    \end{table}

    \paragraph{$\mathbf{A_n}$} First, we show that for every $k \geq 1$ the set of positive roots of $A_{2k}$ is strongly linearly dependent. For each $1 \leq l < k$, note that
    \begin{align*}
        S_l := \sum_{j=2l+1}^{2k} (-1)^j \left( \lambda_{2l} - \lambda_j \right) - \sum_{j=2l+2}^{2k} (-1)^j \left( \lambda_{2l+1} - \lambda_j \right) = 0 \, . 
    \end{align*}
    
    Letting now $\mu = \sum_{j=1}^{2k} \lambda_j$, observe that 
    \begin{align*}
    T:= \left( \lambda_1 + \mu \right) - \sum_{j=2}^{2k} (-1)^{j} \left( \left( \lambda_1 - \lambda_j \right) + \left(\lambda_j + \mu \right) \right) = 0 \, . 
    \end{align*}
    Finally, then, $\pm S_1 \pm \cdots \pm S_{k-1} \pm T = 0$ is a strong linear combination of the positive roots which equals zero, for every choice of sign at each $\pm$. In particular, the dimension of the space of invariant spinors of $\mathrm{MF}\left(A_{2k}\right)$ is at least $2^{k}$. 
    
    We now turn to the case when $n$ is odd. One can express the positive roots of $A_n$ in terms of the basis $\left( \lambda_1 - \lambda_2 , \lambda_2 - \lambda_3 , \dots , \lambda_{n-1}-\lambda_n , \lambda_{n} + \sum_{j=1}^{n} \lambda_j \right)$ given by a fundamental system. There are exactly $n$ positive roots of $A_n$ whose first coordinate in this basis is $1$, namely $\lambda_1 - \lambda_2 , \dots, \lambda_1 - \lambda_n ,$ and $ \lambda_1 + \sum_{j=1}^{n} \lambda_j$. The rest have vanishing first coordinate. Therefore, if $n$ is odd, the set of positive roots of $A_n$ cannot be strongly linearly dependent. 

    \paragraph{$\mathbf{B_n}$} Every strong linear combination of the positive roots of $B_n$ is a sum of terms of the form $\pm \lambda_i$, and there is an odd number of those with $i=1$.

    \paragraph{$\mathbf{C_n}$} Given any strong linear combination of the positive roots of $C_n$, grouping together the terms $\pm (\lambda_i - \lambda_j)$ and $\pm (\lambda_i + \lambda_j)$, there are $n(n+1)/2$ terms of the form $\pm 2 \lambda_i$. In particular, if $n \equiv 1,2 \bmod 4$, there is an odd number of these, and hence the strong linear combination cannot be zero. 
    
    For $n=4k + \varepsilon \geq 3$, with $\varepsilon \in \{0,3\}$, note that for each $0 \leq l < k$ we have 
    \begin{align*} 
        S'_l := &(\lambda_{4l+1} - \lambda_{4l+2}) + (\lambda_{4l+1} + \lambda_{4l+2})  + \sum_{j=4l+3}^{4k+\varepsilon}\left( (\lambda_{4l+1} + \lambda_j) - (\lambda_{4l+1} - \lambda_j)\right) \\
        &+ (\lambda_{4l+2} - \lambda_{4l+3}) + (\lambda_{4l+2} + \lambda_{4l+3}) - \sum_{j=4l+4}^{4k+\varepsilon}\left( (\lambda_{4l+2} + \lambda_j) - (\lambda_{4l+2} - \lambda_j)\right) \\
        &+ \sum_{j=4l+4}^{4k+\varepsilon}\left( (\lambda_{4l+3} + \lambda_j) - (\lambda_{4l+3} - \lambda_j)\right) - \sum_{j=4l+5}^{4k+\varepsilon}\left( (\lambda_{4l+4} + \lambda_j) - (\lambda_{4l+4} - \lambda_j)\right) \\
        & - 2\lambda_{4l+1} - 2\lambda_{4l+2} - 2\lambda_{4l+3} - 2\lambda_{4l+4} = 0 \, .  
    \end{align*}
    Note that $\pm S'_0 \pm \cdots \pm S'_l$ is a strong linear combination of those positive roots which contain $\lambda_1, \lambda_2, \dots, \lambda_{4l+4}$. Hence, if $\varepsilon = 0$, we are done, as every positive root appears exactly once in $\pm S'_0 \pm \cdots \pm S'_{k-1} = 0$. If $\varepsilon = 3$, note that the remaining roots are strongly linearly dependent, as 
    \begin{align*} 
    T' := &(\lambda_{4k+1} - \lambda_{4k+2}) - (\lambda_{4k+1} - \lambda_{4k+3}) + (\lambda_{4k+2} - \lambda_{4k+3}) + (\lambda_{4k+1} + \lambda_{4k+2}) \\
    &+ (\lambda_{4k+1} + \lambda_{4k+3}) + (\lambda_{4k+2} + \lambda_{4k+3}) - 2\lambda_{4k+1} - 2\lambda_{4k+2} - 2\lambda_{4k+3} = 0 \, , 
    \end{align*} 
    and thus $\pm S'_0 \pm \cdots \pm S'_{k-1} \pm T' = 0$. This proof shows that, when it is not zero, the dimension of the space of invariant spinors is at least $2^{\lfloor \frac{n+1}{4} \rfloor}$. 

    \paragraph{$\mathbf{D_n}$} In a similar way as for $C_n$, any strong linear combination of the positive roots of $D_n$ is the sum of $n(n-1)/2$ terms of the form $\pm 2 \lambda_i$, which shows that for $n \equiv 2,3 \bmod 4$ no such linear combination can be zero. 
    
    For $n = 4k + \varepsilon$, with $\varepsilon \in \{0,1\}$, note that for all $0 \leq l < k$ we have 
    \begin{align*} 
        S''_{l} := &\sum_{j=4l+2}^{4k+\varepsilon} (-1)^j \left( (\lambda_{4l+1} - \lambda_j) - (\lambda_{4l+1} + \lambda_j) \right) \\
        &+ (\lambda_{4l+2} - \lambda_{4l+3}) + (\lambda_{4l+2} + \lambda_{4l+3})
    - \sum_{j=4l+4}^{4k+\varepsilon} (-1)^j \left(  (\lambda_{4l+2} - \lambda_j) - (\lambda_{4l+2} + \lambda_j) \right) \\
        &- (\lambda_{4l+3} - \lambda_{4l+4}) - (\lambda_{4l+3} + \lambda_{4l+4}) + \sum_{j=4l+5}^{4k+\varepsilon} (-1)^j \left(  (\lambda_{4l+3} - \lambda_j) - (\lambda_{4l+3} + \lambda_j) \right) \\
        & - \sum_{j=4l+5}^{4k+\varepsilon} (-1)^j \left(  (\lambda_{4l+4} - \lambda_j) - (\lambda_{4l+4} + \lambda_j) \right)  = 0 \, ,  
    \end{align*} 
    As $\varepsilon \in \{0,1\}$, all the positive roots appear exactly once in $ \pm S''_0 \pm \cdots \pm S''_{k-1} = 0$. It is clear, then, that the number of strong linear combinations of the roots that equal zero is at least $2^k$. 
  
    \paragraph{$\mathbf{E_6}$} We exhibit an explicit strong linear combination of the positive roots of $E_6$ that does the job: 
    \begin{align*} 
        &(\lambda_1 + \lambda_2 + \lambda_3 + \lambda_4 + \lambda_5 + \lambda_6) - (\lambda_1 -\lambda_2) + (\lambda_1 - \lambda_3) + (\lambda_1 - \lambda_4) + (\lambda_1 - \lambda_5) - (\lambda_1 - \lambda_6) \\
        &+ (\lambda_2 - \lambda_3) + (\lambda_2 - \lambda_4) - (\lambda_2 - \lambda_5) + (\lambda_2 - \lambda_6) - (\lambda_3 - \lambda_4) + (\lambda_3 - \lambda_5) - (\lambda_3 - \lambda_6) \\
        &- (\lambda_4 - \lambda_5) - (\lambda_4 - \lambda_6) + (\lambda_5 - \lambda_6) - (\lambda_1 + \lambda_2 + \lambda_3) - (\lambda_1 + \lambda_2 + \lambda_4) - (\lambda_1 + \lambda_2 + \lambda_5) \\
        & - (\lambda_1 + \lambda_2 + \lambda_6) - (\lambda_1 + \lambda_3 + \lambda_4) + (\lambda_1 + \lambda_3 + \lambda_5) + (\lambda_1 + \lambda_3 + \lambda_6) + (\lambda_1 + \lambda_4 + \lambda_5) \\
        &+ (\lambda_1 + \lambda_4 + \lambda_6) - (\lambda_1 + \lambda_5 + \lambda_6) + (\lambda_2 + \lambda_3 + \lambda_4) - (\lambda_2 + \lambda_3 + \lambda_5) + (\lambda_2 + \lambda_3 + \lambda_6) \\
        &+ (\lambda_2 + \lambda_4 + \lambda_5) - (\lambda_2 + \lambda_4 + \lambda_6) - (\lambda_2 + \lambda_5 + \lambda_6) + (\lambda_3 + \lambda_4 + \lambda_5) + (\lambda_3 + \lambda_4 + \lambda_6) \\
        &- (\lambda_3 + \lambda_5 + \lambda_6) - (\lambda_4 + \lambda_5 + \lambda_6)  = 0 \, . 
    \end{align*}
    The value of the dimension of the space of invariant spinors is obtained with the aid of a computer.  

    \paragraph{$\mathbf{E_7}$} In any strong linear combination of the positive roots of $E_7$, expressed as a sum of terms of the form $\pm \lambda_i$, there are $27$ terms of the form $ \pm \lambda_1$, so the strong linear combination cannot be zero.
    \paragraph{$\mathbf{E_8}$} Define $\nu := \sum_{i=1}^{8} \lambda_i$, and note that: 
    \begin{align*} 
        & \sum_{1 \leq i < j < k \leq 8} (\lambda_i + \lambda_j + \lambda_k) = \sum_{1 \leq i < j \leq 8} (-\lambda_i - \lambda_j + \nu) = 21 \nu \, , \\
        & \sum_{j=2}^{8} (-1)^j (\lambda_1 - \lambda_j) = - \sum_{j=1}^{8} (-1)^j (\lambda_j + \nu) \, . 
    \end{align*} 
    Therefore, the set 
    \begin{align*} 
&\{ \lambda_1 - \lambda_j \colon 2 \leq j \leq 8 \} \cup \{ \lambda_i + \nu \colon 1 \leq i \leq 8 \} \\ &\cup \{ \lambda_i + \lambda_j + \lambda_k \colon 1 \leq i < j < k \leq 8 \} \cup \{ -\lambda_i - \lambda_j + \nu \colon 1 \leq i < j \leq 8\}
    \end{align*}
    is strongly linearly dependent. It is enough, then, to show that the set $\{ \lambda_i - \lambda_j \, \rvert \, 2 \leq i < j \leq 8 \}$ is as well. But this follows from the proof of the case $A_{2k}$ for $k=3$, by shifting the indices and taking $\mu = \lambda_8$. Using a computer, we obtain the claimed lower bound for the dimension of the space of invariant spinors: $2 \times 70 \times 2640 = 369600$. 
    \paragraph{$\mathbf{F_4}$}
    We exhibit an explicit strong linear combination of the positive roots of $F_4$ that does the job: 
    \begin{align*}
        & \lambda_1 - \lambda_2 - \lambda_3 - \lambda_4 - (\lambda_1 - \lambda_2) - (\lambda_1 - \lambda_3) - (\lambda_1 - \lambda_4) - (\lambda_2 - \lambda_3) - (\lambda_2 - \lambda_4) - (\lambda_3 - \lambda_4) \\
        &- (\lambda_1 + \lambda_2) + (\lambda_1 + \lambda_3) - (\lambda_1 + \lambda_4) + (\lambda_2 + \lambda_3) + (\lambda_2 + \lambda_4) - (\lambda_3 + \lambda_4) \\
        &+ \frac{1}{2}(\lambda_1 + \lambda_2 + \lambda_3 + \lambda_4) - \frac{1}{2}(\lambda_1 - \lambda_2 + \lambda_3 + \lambda_4) + \frac{1}{2}(\lambda_1 + \lambda_2 - \lambda_3 + \lambda_4) \\
        &+ \frac{1}{2}(\lambda_1 + \lambda_2 + \lambda_3 - \lambda_4) + \frac{1}{2}(\lambda_1 - \lambda_2 - \lambda_3 + \lambda_4) + \frac{1}{2}(\lambda_1 - \lambda_2 + \lambda_3 - \lambda_4) \\
        &+ \frac{1}{2}(\lambda_1 + \lambda_2 - \lambda_3 - \lambda_4) + \frac{1}{2}(\lambda_1 - \lambda_2 - \lambda_3 - \lambda_4) = 0 \, . 
    \end{align*}
    The claimed dimension is easily obtained with the aid of a computer. 

    \paragraph{$\mathbf{G_2}$} There are exactly four strong linear combinations of the positive roots of $G_2$ that equal zero: 
    \begin{align*} 
        &\pm \left( \lambda_1 + \lambda_2 + (-\lambda_1 - \lambda_2) - (\lambda_1 - \lambda_2) - (\lambda_1 + 2\lambda_2) + (2\lambda_1 + \lambda_2) \right) = 0 \, , \\
        &\pm \left( \lambda_1 + \lambda_2 + (-\lambda_1 - \lambda_2) + (\lambda_1 - \lambda_2) + (\lambda_1 + 2\lambda_2) - (2\lambda_1 + \lambda_2) \right) = 0 \, .
    \end{align*}
    This finishes the proof of Theorem \ref{thm:main}. 
\end{proof}

Until now, we have only considered \textit{maximal} flag manifolds, and one might ask whether similar results can be obtained for \textit{general} flag manifolds. Given a complex simple Lie algebra, its flag manifolds can behave very differently, as we show now. The following examples show that, when it comes to the spinorial properties we have been considering so far, general flag manifolds can have a wild behaviour. 

\begin{example} (Non-spin flag manifolds)
Complex projective space $\mathbb{CP}^n$ is a flag manifold of $A_n$, and this space is spin if, and only if, $n$ is odd. 
\end{example}
\begin{example} (Two flag manifolds of the same Lie algebra with all possible \emph{spinorial behaviours})
Complex projective space $\mathbb{CP}^{2n+1}$ is also a flag manifold of $C_{n+1}$. We know that this space has invariant spinors if, and only if, $n$ is odd -- see \cite{AH24}. However, by Theorem \ref{thm:main}, $\mathrm{MF}\left(C_{n+1}\right)$ has invariant spinors if, and only if, $n \equiv 2,3 \bmod 4$. In particular: 
\begin{itemize}
\item $\mathbb{CP}^{3}$ is a (non-maximal) flag manifold of $B_2$ which admits non-trivial invariant spinors, however the maximal flag manifold of $B_2$ does not. 
\item $\mathbb{CP}^{5}$ is a (non-maximal) flag manifold of $C_3$ which does not admit non-trivial invariant spinors, however, the maximal flag manifold of $C_3$ does. 
\item $\mathbb{CP}^{7}$ is a (non-maximal) flag manifold of $C_4$ which admits non-trivial invariant spinors, and the maximal flag manifold of $C_4$ does as well. 
\item $\mathbb{CP}^{9}$ is a (non-maximal) flag manifold of $C_5$ which does not admit non-trivial invariant spinors, and neither does the maximal flag manifold of $C_5$. 
\end{itemize}
\end{example}

\section*{Acknowledgements}
D. Artacho is funded by the UK Engineering and Physical Sciences Research Council (EPSRC), grant EP/W523872. The authors are grateful to the anonymous referees for their comments and suggestions, and to {\'A}lvaro Vicente for his assistance with the computational aspect of the paper.  

\bibliographystyle{alphaurl}
\bibliography{references.bib}

\end{document}